\newtheorem{theorem}{Theorem}
\newtheorem{problem}[theorem]{Problem}
\newtheorem{lemma}[theorem]{Lemma}
\newtheorem{prop}[theorem]{Proposition}
\newtheorem{cor}[theorem]{Corollary}
\newtheorem{definition}[theorem]{Definition}
\DeclareMathOperator{\id}{id}
\newcommand{\nl}{\mathrm{(NL)}}
\newcommand{\acts}{\curvearrowright}
\newcommand{\e}{\epsilon}
\newcommand{\zz}{\ensuremath{\mathbb{Z}}}
\newcommand{\nn}{\ensuremath{\mathbb{N}}}
\newcommand{\hh}{\ensuremath{\mathbb{H}}}
\newcommand{\Ga}{\Gamma}
\newcommand{\op}{\operatorname}
\newcounter{scomments}
 \newcounter{gcomments}
\newcounter{racomments}
 \newcounter{rocomments}
\let\@wraptoccontribs\wraptoccontribs
\begin{document}

\title{Property $\boldsymbol{\nl}$ in Coxeter groups}
\date{\today}
\subjclass[2020]{Primary 20F65;   
                 Secondary 20E08} 

\keywords{group action, hyperbolic metric space, Property $\nl$, coxeter group}

\author[S.~Balasubramanya]{Sahana Balasubramanya}
\address{Department of Mathematics, Lafayette College, USA}
\email{hassanba@lafayette.edu}

\author[G.~Burkhalter]{Georgia Burkhalter\textsuperscript{$\dagger$}}
\address{University of North Georgia, USA}
\email{georgiaburkhalter@gmail.com}
\thanks{\textsuperscript{$\dagger$} denotes undergraduate authors.}

\author[R.~Niebler]{Rachel Niebler\textsuperscript{$\dagger$}}
\address{Haverford College, USA}
\email{rlpniebler@gmail.com}

\author[R.~Shapiro]{Roberta Shapiro}
\address{Department of Mathematics, Georgia Institute of Technology, USA}
\email{shapirorh@gmail.com}
\maketitle


\begin{abstract}
    A group has Property $\nl$ if it does not admit a loxodromic element in any hyperbolic action. In other words, a group with this property is inaccessible for study from the perspective of hyperbolic actions. This property was introduced by Balasubramanya, Fournier-Facio and Genevois, who initiated the study of this property. We expand on this research by studying Property $\nl$ in Coxeter groups, a class of groups that are defined by an underlying graph. One of our main results show that a right-angled Coxeter group (RACG) has Property $\nl$ if and only if its defining graph is complete. We then move beyond the right-angled case to show that if a defining graph is disconnected, its corresponding Coxeter group does not have Property $\nl$. Lastly, we classify which triangle groups (Coxeter groups with three generators) have Property $\nl$. 
\end{abstract}


\section{Introduction}\label{sec:intro}

Taking inspiration from the work of Gromov \cite{Gromov}, isometric group actions on hyperbolic spaces (sometimes called \emph{hyperbolic actions}) have become a popular avenue of research in geometric group theory. Understanding the ways in which a group can act on a hyperbolic space can lend insight into its group-theoretic properties.

Property $\nl$, first formally defined in \cite{PropNL}, attempts to understand groups that are inaccessible from this perspective. A group $G$  has Property $\nl$ if no element acts loxodromically for any action of $G$ on any hyperbolic space (see Sections~\ref{subsec:isom} and ~\ref{subsec:propnl} for details). 

Coxeter groups appear naturally as abstractions of reflection groups and provide information about the symmetries of an associated space. Such groups are most commonly defined using a \emph{Coxeter graph}, which is an edge-weighted graph where the vertices correspond to generators of order 2, while vertices $a$ and $b$ are connected by an edge of weight $n \in \nn$ if and only if $n$ is the minimal natural number such that $(ab)^n=1$ (see Section~\ref{subsec:cox} for details). If all edge weights equal two, the group is called \emph{right-angled}. In this paper, we are motivated by the following question. 

\begin{problem}\label{ques1} Which Coxeter groups have Property $\nl$? Moreover, how can this be deduced from the defining graph of a Coxeter group? 
\end{problem}

We provide a partial answer to this question by proving the following theorems. 

\begin{theorem}\label{thm:main1}
A right-angled Coxeter group has Property $\nl$ if and only if its defining graph $\Ga$ is complete.
\end{theorem}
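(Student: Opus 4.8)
The plan is to prove the two implications separately, handling the forward direction (complete $\Rightarrow$ Property $\nl$) directly and the converse by contrapositive. For the direction that a complete graph yields Property $\nl$, I would first observe that when $\Ga$ is complete every pair of generators commutes, so the defining relations force $W_\Ga \cong (\zz/2\zz)^n$ where $n$ is the number of vertices; in particular $W_\Ga$ is finite. Every element of a finite group has finite order, and a finite-order isometry of any hyperbolic space is elliptic, as it has bounded orbits and so cannot be loxodromic. Hence no element of $W_\Ga$ can act loxodromically in any hyperbolic action, which is precisely Property $\nl$.

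For the converse I would show the contrapositive: if $\Ga$ is not complete, then $W_\Ga$ admits a hyperbolic action with a loxodromic element, so it fails Property $\nl$. Since $\Ga$ is incomplete, there exist two non-adjacent vertices $a$ and $b$. In a RACG two generators commute exactly when their vertices are joined by an edge, so $a$ and $b$ satisfy no relation beyond $a^2 = b^2 = 1$, and the special subgroup they generate is the infinite dihedral group $D_\infty$. The key structural input is the standard retraction: the map sending $a$ and $b$ to themselves and every other generator to the identity respects all defining relations, since each relation $s^2 = 1$ is preserved and each commuting relation either survives or becomes trivial. This defines a surjective homomorphism $\pi \colon W_\Ga \to D_\infty$.

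To produce the loxodromic element I would use that $D_\infty$ acts by isometries on the real line $\rr$, a $0$-hyperbolic space, with $a$ and $b$ acting as reflections about distinct points and the product $ab$ acting as a nontrivial translation, hence loxodromically. Pulling this action back along $\pi$ equips $W_\Ga$ with an isometric action on $\rr$ in which the element $ab \in W_\Ga$ acts exactly as $\pi(ab)$ does, namely by translation. Because the elliptic/parabolic/loxodromic type of an isometry depends only on the isometry itself, $ab$ is loxodromic for this $W_\Ga$-action, witnessing the failure of Property $\nl$.

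The routine verifications are that $\pi$ is well defined and that pulling back an action preserves loxodromicity; I expect the main (though still mild) obstacle to be confirming that the retraction $\pi$ genuinely respects the RACG presentation, as this is where the right-angled hypothesis is used. Once that is in hand, the infinite-dihedral action on $\rr$ supplies the loxodromic element immediately, and the theorem follows.
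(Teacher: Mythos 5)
Your proof is correct and follows essentially the same route as the paper: the complete case via $W_\Ga \cong (\zz/2\zz)^n$ being finite, and the incomplete case via the retraction onto $D_\infty$ killing all generators except the two non-adjacent ones. The only cosmetic difference is that you pull back the $D_\infty$-action on $\rr$ directly along the surjection, whereas the paper invokes the first isomorphism theorem together with its Proposition~\ref{prop:nlquot}; these are the same argument.
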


Problem~\ref{ques1} is inspired by a large body of research on Coxeter groups aimed at understanding how properties of a Coxeter group can be discerned from the structure of the defining graph. We also note that most of the groups classically studied in geometric group theory, such as mapping class groups, right-angled Artin groups, and $\op{Out}(\mathbb{F}_n)$, fail to have Property $\nl$. However, there exist finite Coxeter groups, which naturally have Property $\nl$. Problem~\ref{ques1} is thus relevant for (infinite) Coxeter groups. A consequence of Theorem~\ref{thm:main1} is the following equivalent characterization of Property $\nl.$ 

\begin{cor} A right-angled Coxeter group has Property $\nl$ if and only if it is finite.    
\end{cor}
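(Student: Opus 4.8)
The plan is to combine Theorem~\ref{thm:main1} with the elementary classification of finite right-angled Coxeter groups. Since Theorem~\ref{thm:main1} already establishes that a RACG $W_\Ga$ has Property $\nl$ exactly when $\Ga$ is complete, it suffices to prove the purely group-theoretic equivalence that $W_\Ga$ is finite if and only if $\Ga$ is complete. Chaining the two equivalences then yields the corollary.

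For the forward direction I would suppose $\Ga$ is the complete graph on vertex set $V$. Every pair of generators is then joined by an edge of weight $2$, so the defining relations assert that all generators commute and that each has order $2$. Hence $W_\Ga \cong (\zz/2\zz)^{|V|}$, which is finite.

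For the converse I would argue by contrapositive. If $\Ga$ is not complete, then there are two vertices $a, b$ with no edge between them, so by the defining convention $s_a s_b$ has no finite order, and the special (parabolic) subgroup $\langle s_a, s_b\rangle$ is the right-angled Coxeter group on the edgeless graph on $\{a,b\}$, namely the infinite dihedral group $\zz/2\zz * \zz/2\zz$. Since $W_\Ga$ then contains an element of infinite order, it is infinite.

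The only point requiring care is the identification of $\langle s_a, s_b\rangle$ with the infinite dihedral group, which rests on the standard fact that the subgroup of a Coxeter group generated by a subset of the standard generators is itself the Coxeter group on the induced subdiagram. Granting this, the corollary is immediate: $W_\Ga$ has Property $\nl$ iff $\Ga$ is complete by Theorem~\ref{thm:main1}, iff $W_\Ga$ is finite. I do not anticipate a genuine obstacle here, as essentially all of the mathematical content lives in Theorem~\ref{thm:main1}, with the finiteness equivalence being a classical feature of right-angled Coxeter groups.
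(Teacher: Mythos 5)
Your proof is correct, and its skeleton---chaining Theorem~\ref{thm:main1} with a finiteness criterion---matches the paper's implicit derivation, but you route one direction through heavier machinery than is needed. For ``$\nl$ implies finite,'' you and the paper agree: a complete graph forces $W_\Ga \cong (\zz/2\zz)^{|V|}$, which is finite. The difference is the other direction. You prove the full graph-theoretic equivalence ``finite iff complete,'' whose nontrivial half (not complete implies infinite) you justify via the standard parabolic-subgroup theorem for Coxeter groups---an external fact the paper never cites. The paper's route avoids this entirely: if $W_\Ga$ is finite, then it has Property $\nl$ simply because loxodromic elements must have infinite order (as noted in Section~\ref{subsec:propnl}), so no classification of finite RACGs is needed for that direction at all. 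Moreover, if you do want ``not complete implies infinite,'' it already follows from the paper's own toolkit: the proof of Theorem~\ref{thm:main1} constructs a surjection from $W_\Ga$ onto $D_\infty$ whenever $\Ga$ is incomplete, and a group surjecting onto an infinite group is infinite---no appeal to parabolic subgroups required. So your argument stands as written, but each of your external inputs can be replaced by facts already present in the paper, which is what makes the corollary immediate there.
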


In the non-right-angled case, we limit our focus to triangle groups, which are Coxeter groups whose defining graphs have only three vertices. In this case, there are no restrictions on edge weights. There are three distinct types of triangle groups: spherical, euclidean, and hyperbolic (see section~\ref{sec:nonracgs}). These terminologies derive from whether the triangle group results in a tiling of the sphere, the plane, or the hyperbolic plane, respectively \cite{magnus}. Theorem~\ref{thm:main2} may therefore be thought of as classification result for triangle groups. 

\begin{theorem}\label{thm:main2}
A triangle Coxeter group has Property $\nl$ if and only if it is not hyperbolic. 
\end{theorem}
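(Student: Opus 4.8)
The plan is to split along the standard geometric trichotomy for triangle groups. Writing $\Delta(p,q,r)$ for the triangle group with edge weights $p,q,r$, recall that it is \emph{spherical}, \emph{euclidean}, or \emph{hyperbolic} according to whether $\tfrac1p+\tfrac1q+\tfrac1r$ is $>1$, $=1$, or $<1$. The biconditional then amounts to three assertions: spherical and euclidean triangle groups have Property $\nl$, while hyperbolic ones do not. The spherical and euclidean cases give the implication ``not hyperbolic $\Rightarrow$ Property $\nl$'', and the hyperbolic case, contrapositively, gives ``Property $\nl \Rightarrow$ not hyperbolic''.

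Two of the cases are quick. A spherical triangle group is finite, and a finite group has Property $\nl$ since every orbit is bounded, so no element is loxodromic. A hyperbolic triangle group, on the other hand, is the reflection group of a hyperbolic triangle with angles $\pi/p,\pi/q,\pi/r$ and acts properly discontinuously and cocompactly by isometries on $\hh$; as a non-elementary cocompact Fuchsian group it contains hyperbolic isometries, which are loxodromic for this action, so Property $\nl$ fails. (Equivalently, such a group is non-elementary word-hyperbolic and acts loxodromically on its Cayley graph.)

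The euclidean case is the heart of the argument, and the subtlety is that Property $\nl$ is \emph{not} a commensurability invariant: although a euclidean triangle group $W$ is virtually $\zz^2$ and $\zz^2$ itself fails Property $\nl$ (map $\zz^2 \twoheadrightarrow \zz$ and pull back the translation action on $\rr$), I claim $W$ nonetheless has it. The plan is to use the classification of isometric actions on hyperbolic spaces into elliptic, horocyclic, lineal, focal, and general type. Since $W$ is amenable, it admits no action of general type, so any hyperbolic action of $W$ is elliptic, horocyclic, lineal, or focal; the first two have no loxodromic elements. In a lineal or focal action, $W$ fixes a boundary point $\xi$ after passing to the subgroup $W'$ that pointwise stabilizes the relevant fixed set — of index at most $2$ in the lineal case, and all of $W$ in the focal case. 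The Busemann cocycle at $\xi$ restricts to a homomorphism $\beta\colon W' \to \rr$, and any loxodromic element $g$ with $\xi$ as an endpoint (which necessarily lies in $W'$) satisfies $\beta(g)\neq 0$; hence the existence of a loxodromic would force $\mathrm{Hom}(W',\rr)\neq 0$, i.e.\ $W'$ to have infinite abelianization.

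It therefore suffices to prove the algebraic statement that every subgroup of index at most $2$ in a euclidean triangle group has finite abelianization, and this is the main obstacle. I would handle it via the crystallographic structure $W \cong \zz^2 \rtimes F$ with $F$ the (dihedral) point group: any subgroup $H$ of index at most $2$ is again cocompact crystallographic, so $H^{\mathrm{ab}}\otimes\qq \cong (\qq^2)_{P}$, where $P$ is the point group of $H$, and these coinvariants vanish precisely when $P$ contains a nontrivial rotation (which has no eigenvalue $1$). The remaining work is to check, for each of $\Delta(3,3,3)$, $\Delta(2,4,4)$, and $\Delta(2,3,6)$, that every index-$2$ subgroup retains a nontrivial rotation in its point group; since the point groups are $D_3$, $D_4$, and $D_6$, and each of these together with its index-$2$ subgroups contains a nontrivial rotation, the coinvariants vanish in every case and no loxodromic action exists. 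Assembling the three cases yields the biconditional.
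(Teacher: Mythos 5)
Your proof is correct, and in the spherical and hyperbolic cases it is essentially the paper's argument (finiteness, respectively Svarc--Milnor/cocompact action on $\hh^2$). The difference is in the euclidean case, which is the heart of the theorem, and there you take a genuinely different route. The paper outsources the dynamics to a quoted result, Proposition~\ref{prop:nlamen} (from \cite{PropNL}): a finitely generated virtually abelian group has Property $\nl$ if and only if it does not surject onto $\zz$ or $D_\infty$. Its own key lemma (Lemma~\ref{th:lemma}) is then elementary algebra: the generators of a triangle group are torsion, so every homomorphism to $\zz$ is trivial, and a short case analysis of involutions shows that a surjection onto $D_\infty$ would force some product $ab$ of two generators --- which has finite order, since all three edges of the defining graph are present --- to map to a nontrivial translation. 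You instead re-prove the relevant special case of that criterion from first principles: the classification of hyperbolic actions into elliptic/horocyclic/lineal/focal/general type, exclusion of general type via amenability, and the Busemann quasimorphism reducing a loxodromic in a lineal or focal action to a nonzero homomorphism $W'\to\rr$ from a subgroup $W'$ of index at most $2$; your finiteness check is then a crystallographic computation ($H^{\mathrm{ab}}\otimes\qq\cong(\qq^2)_P$, which vanishes because every subgroup of index at most $2$ in $D_3$, $D_4$, $D_6$ contains a nontrivial rotation) in place of the paper's surjection lemma. What each buys: the paper's route is shorter and almost entirely elementary, at the cost of leaning on the cited virtually-abelian criterion; yours is self-contained at the level of ideas, and the rotation criterion is more conceptual and generalizes readily to other euclidean reflection groups, but it requires heavier machinery. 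One point to tighten in your writeup: on a general hyperbolic space the Busemann function gives only a homogeneous quasimorphism on the stabilizer of $\xi$, not a homomorphism; it becomes a homomorphism precisely because $W'$ is amenable (homogeneous quasimorphisms on amenable groups are homomorphisms), so amenability is needed at that step too, not only to exclude general type.
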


In addition, the defining graph can be used to determine whether a triangle group is hyperbolic, which provides a positive answer to Question~\ref{ques1} in this case also.

\medskip\noindent\textbf{Paper organization.} In Section~\ref{sec:background}, we introduce some background on Coxeter groups and on isometric group actions on hyperbolic spaces. In Section~\ref{sec:racgs}, we prove Theorem~\ref{thm:main1} about right-angled Coxeter groups. In Section~\ref{sec:nonracgs}, we prove Theorem~\ref{thm:main2} about triangle groups.

\medskip\noindent\textbf{Acknowledgements.} The authors would like to thank the summer REU program held at Georgia Institute of Technology in 2023 for bringing them all together to work on topics of common interest. A special thanks to Dan Margalit for organising and supervising all the projects conducted in geometric group theory of the CUBE group of the REU, and for providing helpful feedback on drafts of this paper.  Lastly, we thank Wade Bloomquist, Ryan Dickmann, and Abdoul Karim Sane for further mentorship during the REU program. 


\section{Background}\label{sec:background}

In this section, we collect some basic information about the terms used throughout the remainder of the paper. 

\subsection{Group Actions}
Describing the action of a group on a set is  one way of understanding the symmetries of the set being acted on. We can view an action as assigning each element of the group to a symmetry of the set. Formally, we have the following. 

\begin{definition} An action of a group $G$ on a set $X$ is a map $\varphi: G \times X \to X$ such that \begin{enumerate}
    \item $\varphi (e,x) = x$ for all $x \in X$ where $e$ is the identity element of $G$; \item $\varphi(g,\varphi(h,x)) = \varphi (gh, x)$ for all $g,h \in G$ and for all $x \in X$.
\end{enumerate}
\end{definition}

We will often use the shorthand notations $g\cdot x = \varphi(g,x)$ or $gx=\varphi(g,x)$ and denote by $G \acts X$ the action of $G$ on $X$. A group can always act on any space trivially, where every element of the group sends each element of the set to itself, or $g\cdot x = x$ for all $g \in G$ and for all $x \in X$. Every group can also act on itself by left multiplication via the group operation. In this paper, we will most often deal with non-trivial actions that satisfy certain properties.

\begin{definition} If $G \acts X$ and $X$ is a metric space with a metric (distance function) $d$, then the group action is called \emph{isometric} if $d(g\cdot x, g\cdot y) = d(x,y)$ for all $g \in G$ and all $x, y \in X$.
\end{definition}

Group actions are more interesting when the set is related to the group. For example, $S_n$ acts on the set $\{1, ... , n\}$ and, similarly, $D_n$, the group of symmetries of the regular $n$-gon, can be viewed as permuting the set of vertices of the associated polygon. 

\subsection{Hyperbolic Spaces}\label{subsec:hyp}
The hyperbolic plane differs from the Euclidean plane in how distances are measured and how geodesics (or paths of shortest distance between points) are defined. The motivation to define hyperbolic spaces comes from the Poincare disk model of the hyperbolic plane, denoted $\hh^2$. The hyperbolic plane $\hh^2$ can be visualized as a disk with open boundary that is conceptualized as a circle at infinity. Geodesics in this space tend to veer towards the interior of the circle rather than follow the straight line (in the usual Euclidean sense) between their endpoints.

Hyperbolicity was first defined by Gromov in terms of the \emph{Gromov product} in \cite{Gromov}; however, we employ an equivalent definition of hyperbolicity due to Rips.

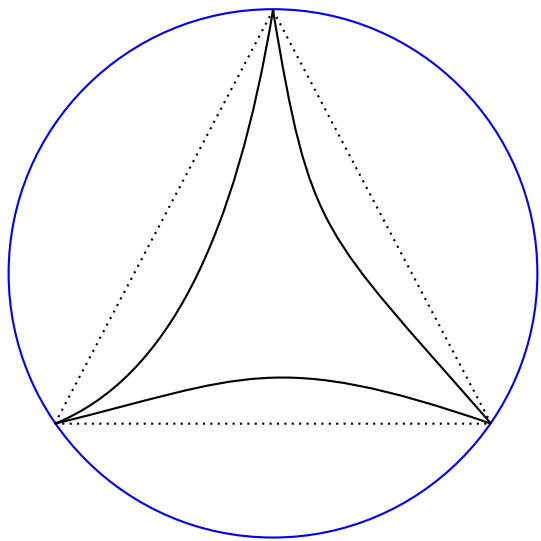
\begin{figure}[ht]
\centering
\begin{tikzpicture}
\draw[thick,blue] (0,0)  circle (100pt);
\draw[thick, dotted] (-2.9,-2) -- (0,3.5);
\draw[thick, dotted] (0,3.5) -- (2.9,-2);
\draw[thick, dotted] (-2.9,-2) --(2.9,-2);
\draw[thick] (-2.9,-2) .. controls (-1,-1,) and (-0.5,0.5) .. (0,3.5);
\draw[thick](2.9,-2) .. controls (1,1,) and (0.5,0.5) .. (0,3.5);
\draw[thick] (-2.9,-2) .. controls (0,-1,) and (0,-1) .. (2.9,-2);

\end{tikzpicture}
\caption{Geodesics in $\hh^2$ (solid) vs Euclidean geodesics (dotted).}
\label{fig:geodsinh2}
\end{figure}

\begin{definition} Let $(X,d)$ be a geodesic metric space. Let $\mathcal{N}_\e(\cdot)$ denote the $\e$-neighborhood of the argument. The space $X$ is \emph{hyperbolic} if there exists a $\delta \geq 0$ such that geodesic triangles are $\delta-$thin, i.e. for any $x, y, z \in X$ and any geodesics $[x,y], [y,z]$ and $[x,z]$ between them, $$\mathcal{N}_\delta( [x,y] \cup [y,z]) \supseteq [x,z].$$

The triangle with sides $[x,y], [y,z]$ and $[x,z]$ is called a \emph{geodesic triangle} (see Figure~\ref{fig:thintriangles}). The minimal constant $\delta$ that satisfies this condition is called the \emph{hyperbolicity constant} of $X$.    
\end{definition}

For example, trees (geodesic spaces with no closed loops) are $0-$hyperbolic since geodesic triangles are degenerate, forming tripods (see Figures~\ref{fig:linehyp} and ~\ref{fig:treehyp}). The hyperbolic plane $\hh^2$ is $\ln(1 + \sqrt{2})-$hyperbolic. 

\begin{figure}[ht]
\centering
\begin{tikzpicture}[font=\sffamily]
 \path (0,0) coordinate (A) (4,1) coordinate (B) (2,-2) coordinate (C);
 \draw[thick,path picture={
 \foreach \X in {A,B,C}
 {\draw[line width=0.4pt] (\X) circle (1);}}] (A) node[left]{$x$} to[bend right=25] 
 (B) node[above right]{$y$} to[bend right=20] 
 (C) node[below]{$z$} to[bend right=25] cycle;

\draw[thick, dotted, red, rotate=280] (0,2) ellipse (25pt and 100pt);
\draw[thick, dotted, blue, rotate=310] (2,1.5) ellipse (25pt and 85pt);
\draw[thick, dotted, green, rotate=220] (0,2) ellipse (50pt and 60pt);

\node[black] (label) at (6.5,1,5) {$\delta-$nbhds};
 
\end{tikzpicture}
\caption{Thin triangles condition}
\label{fig:thintriangles}
\end{figure}
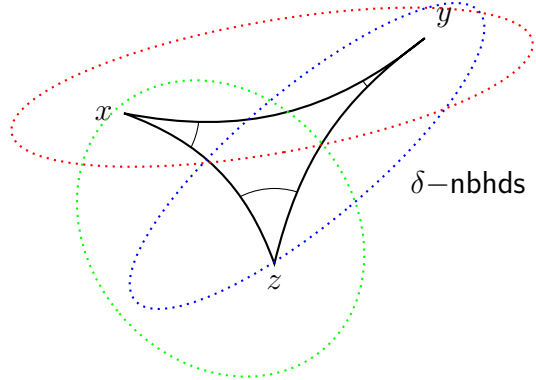

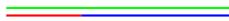
\begin{figure}[ht]
\centering
\begin{tikzpicture}[scale=0.5]
\draw[thick][red] (0,0) -- (-2,0);
\draw[thick][blue] (0,0) -- (4,0);
\draw[thick][green] (-2,0.2) -- (4,0.2);
\end{tikzpicture}
\caption{Degenerate geodesic triangles on a line.}
\label{fig:linehyp}
\end{figure}

\begin{figure}[ht]
\centering
\begin{tikzpicture}[scale=0.5]
\draw[thick][red] (0,0) -- (0,3);
\draw[thick][blue] (0,0) -- (-2,-2);
\draw[thick][green] (0,0) -- (2,-2);
\draw[thick] (0,3) -- (-2,5);
\draw[thick] (0,3) -- (2,5);
\draw[thick] (-2,-2) -- (-4,0);
\draw[thick] (-2,-2) -- (-4,-4);
\draw[thick] (2,-2) -- (4,0);
\draw[thick] (2,-2) -- (4,-4);
\end{tikzpicture}
\caption{Geodesic triangles in a tree are tripods.}
\label{fig:treehyp}
\end{figure}
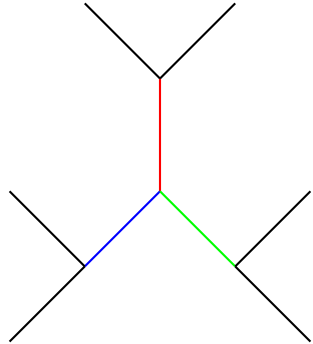

We now define the Gromov boundary of a hyperbolic space $X$, denoted $\partial X$, as follows. 

\begin{definition} Let $(X,d)$ be a hyperbolic metric space. Given points $x, y, z \in X$, the \emph{Gromov product} of points $x$ and $y$ with respect to $z$ is defined by
\[(x \mid y)_z = \dfrac{1}{2} \left( d(x,z) + d(y,z) - d(x,y) \right).\]

A sequence $(x_n)$ of points of $X$ \emph{converges at infinity} if $(x_i \mid x_j)_z \to \infty$
as $i, j \to \infty$ (this definition is independent of the choice of the base point $z$). Two such sequences $(x_i)$ and $(y_i)$ are \emph{equivalent} if $(x_i \mid y_j)_z \to \infty $ as $i, j \to \infty$. If $a$ is the equivalence class of $(x_i)$ under this equivalence, we say that the sequence $x_i$ converges to $a$. The boundary $\partial X$ is defined as the set of all such equivalence classes of sequences convergent at infinity. 
\end{definition}

We end this subsection with the definition of a quasi-isometry.

\begin{definition}
    Let $(X, d_X)$ and $(Y, d_Y)$ be metric spaces. A map $q \colon X \to Y$ is called a \emph{quasi-isometric embedding} if there exists a constant $C \geq 1$ such that $$ -C + \dfrac{1}{C} d_X( a,b) \leq d_Y ( q(a), q(b)) \leq C d_X(a,b) + C$$ for all $a,b \in X$. If, in addition, for every $y \in Y$, there is an $x \in X$ such that $d_Y(q(x), y) \leq C$, then $q$ is called a \emph{quasi-isometry}. 
\end{definition}

Note that quasi-isometry is an equivalence relation of the set of metric spaces. Further, if the metric spaces $X,Y$ are quasi-isometric, then $X$ is hyperbolic if and only if $Y$ is hyperbolic, i.e., quasi-isometries preserve hyperbolicity of metric spaces.

\subsection{Types of isometries in hyperbolic actions.}\label{subsec:isom}
If $G$ acts on a hyperbolic space by isometries, then we call such an action a \emph{hyperbolic action.} Gromov showed that for a hyperbolic group action $G \acts X$, an element $g \in G$ can be classified into one of three possible types of isometries.

\begin{enumerate}
    \item We say $g$ is \emph{elliptic} when the orbit of any $x_0 \in X$, is bounded, i.e. Given $x_0 \in X$, there exists a constant $C$ depdning on $x_0$ such that $d( g^n\cdot x_0, x_0) \leq C $ for all $n \in \nn$. In other words, repeatedly applying $g$ will not send $x_0$ to approach the boundary $\partial X$. 
    \item  We say $g$ is \emph{loxodromic} if the map $n \to g^n\cdot x_0$ is a quasi-isometric embedding $\zz \to X$. In this case, the boundary $\partial X$ contains two fixed points for $g$, $\xi^+$ and $\xi^-$. In this case, the action of $g$ appears similar to a translation, with $g^n \cdot x_0 \to \xi^+$ and $g^{-n}\cdot x_0 \to \xi^-$. 
    \item We say $g$ is \emph{parabolic} if it is neither elliptic nor loxodromic. In other words, the boundary $\partial X$ contains exactly one fixed point for $g$, say $\xi \in \partial X $. In this case, the orbit of $g$ is unbounded, and we have that both $g^n \cdot x_0 \to \xi$ and $g^{-n}\cdot x_0 \rightarrow \xi$.
\end{enumerate}

Property $\nl$, defined below, is characterized by the lack of loxodromic isometries. Loxodromic isometries have been very useful in studying dynamics of group actions and often yield a rich structure for the concerned groups; we refer to \cite{Gromov, ABO, Osin, dgo, bestvinasurvey} and references therein for the interested reader. 

\subsubsection{Infinite Dihedral Group}\label{subsec:dinf}
One important example of a group action used in the main result in this paper is the action of the infinite dihedral group, $D_{\infty}$, on the real line, which is a hyperbolic space. The infinite dihedral group is the complete group of symmetries of the line marked at the integral points; we define it as $D_{\infty}=\zz \rtimes (\zz \slash 2\zz).$

\begin{figure}[ht]
\centering
\begin{tikzpicture}[scale=0.5]
\draw[thick][<->] (-12,0) -- (12,0);
\draw[thick, dotted] (0,2) -- (0,-2);
\draw[thick, dotted] (4,2) -- (4,-2);

\node (zero) at (0.5,-0.5) {$0$};
\node (one) at (4.5, -0.5) {$1$};
\node (r) at (0,2.5) {$\leftrightarrow$ r};
\node (s) at (4,2.5) {$\leftrightarrow$ s};

\end{tikzpicture}
\caption{$D_\infty$ as a group of reflections.}
\label{fig:dinf}
\end{figure}
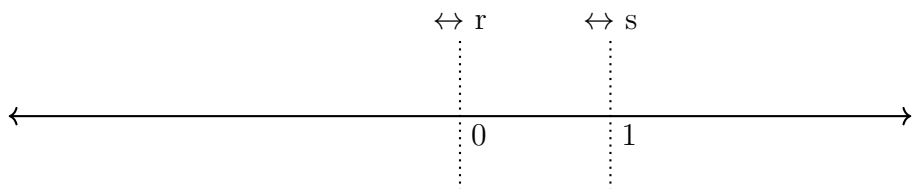

We can also view the infinite dihedral group as the group generated by two parallel reflections across different fixed points on the real line. Let $r$ be the reflection across the point $x=0$ where $r(x) = -x$ and let $s$ be the reflection across the point $x=1$ where $s(x) = 2-x$. Composing these two reflections yields a loxodromic element in the group, called a \emph{translation}. For example, performing $r$ and then $s$ results in a translation $g = sr$ to the right by two units. It follows that as $n \to \infty$, we have $g^n x_0 \to \infty$ and $g^{-n}x_0 \to -\infty$ for any $x_0 \in \zz$. We can see that $g$ fixes only the two points on the boundary of line, namely the points at $\infty$ and $-\infty$. Thus, it satisfies the definition of a loxodromic element. We can check that the generators $r,s$ do not commute by computing the element $rs$, which is a translation to the left by two units. Elements of $D_\infty$ are either of order two (reflections or, equivalently, \emph{involutions}) or are of infinite order (called \emph{translations}). This nomenclature takes inspiration from the action of these elements on the line.

 \subsection{Coxeter Groups}\label{subsec:cox}
Let $\Ga$ be a finite graph with $n \geq 1$ vertices $s_1, \cdots, s_n$ with the following information:  vertices $s_i, s_j,$ with $i \neq j$ may be connected by an edge labelled by a weight $m_{ij} = m_{ji} \in [2, \infty) \cap \zz$. $\Ga$ is then called a \emph{Coxeter graph}. 

Associated to such a Coxeter graph, one can define a \emph{Coxeter group} $W_\Ga$ as follows. A generating set for $W_\Ga$ is $S = \{s_1, s_2, \cdots s_n\}$. Moreover, $s_i^2=1$ and if two vertices $s_i$ and $s_j$ with $i \neq j$ are connected by an edge labeled by $m_{ij}$, then $(s_is_j)^{m_{ij}} =1.$

In other words, the following is a presentation of $W_\Ga$:

\begin{align*}
    W_\Ga = \langle s_1,\ldots,s_m | & s_1^2=\ldots=s_m^2 =1,\\
        & (s_is_j)^{m_{ij}} =1 \text{ if there is an edge labelled $m_{ij}$ between }s_i \text{ and } s_j \rangle. 
\end{align*}


 A missing edge denotes the \emph{free} relation between the associated generators; i.e. generators not connected by any edge lack any relation between them. A graph is called \emph{complete} if every pair of vertices has an edge between them. When $m=2,$ we treat the edges as unlabeled since the labeling is implied. The relation arising from $m=2$ is important because it indicates that $s_i$ commutes with $s_j$: since each $s_i$ is its own inverse, when $(s_is_j)^2 = s_is_js_is_j = 1 $, we can right multiply by $s_js_i$ to see that $s_is_j = s_js_i$. Right-angled Coxeter groups are a special case in which the only relations between generators are $(s_is_j)^{m_{ij}} = 1$ with $m_{ij} = 2$. In other words, in the right-angled setting, two generators must either commute with each other or have no relation. 

Returning to the previous example of the infinite dihedral group, we see that its associated Coxeter graph must have two vertices, as the group is generated by two reflections. Since their composition results in a translation (an element with infinite order), there is no edge connecting them; see Figure~\ref{fig:egsofcgs} for more examples of Coxeter groups.

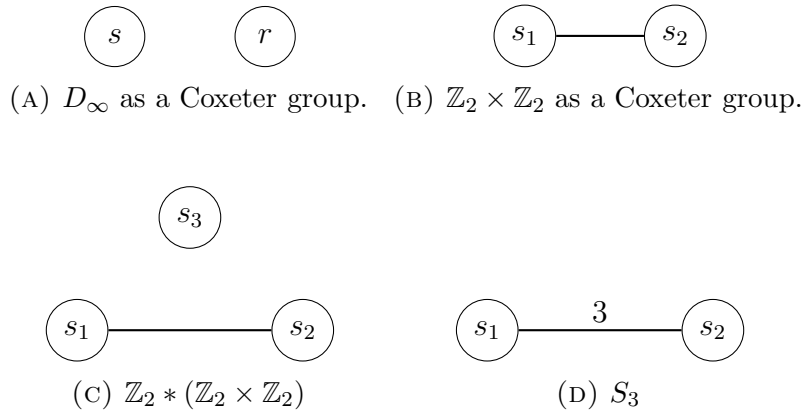
\begin{figure}[ht]

\begin{subfigure}{0.33\textwidth}
\centering
\begin{tikzpicture}[scale=0.5]
\node[circle, draw, minimum size=0.8cm] (s) at (-2, 0) {$s$};
\node[circle, draw, minimum size=0.8cm] (r) at (2, 0) {$r$};
\end{tikzpicture}
\caption{$D_\infty$ as a Coxeter group.}
\end{subfigure}%
\begin{subfigure}{0.33\textwidth}
\centering
\begin{tikzpicture}[scale=0.5]
\node[circle, draw, minimum size=0.8cm] (s_1) at (-2, 0) {$s_1$};
\node[circle, draw, minimum size=0.8cm] (s_2) at (2, 0) {$s_2$};
\draw[thick] (s_1)--(s_2);
\end{tikzpicture}
\caption{$\zz_2 \times \zz_2$ as a Coxeter group.}
\end{subfigure}

\vspace{25pt}

\begin{subfigure}{0.33\textwidth}
\centering
\begin{tikzpicture}[scale=0.5]
\node[circle, draw, minimum size=0.8cm] (s1) at (-3, 0) {$s_1$};
\node[circle, draw, minimum size=0.8cm] (s2) at (3, 0) {$s_2$};
\draw[thick] (s1)--(s2);
\node[circle, draw, minimum size=0.8cm] (s3) at (0, 3) {$s_3$};
\end{tikzpicture}
\caption{$\zz_2 \ast (\zz_2 \times \zz_2)$}
\end{subfigure}%
\begin{subfigure}{0.33\textwidth}
\centering
\begin{tikzpicture}[scale=0.5]
\node[circle, draw, minimum size=0.8cm] (s) at (-3, 0) {$s_1$};
\node[circle, draw, minimum size=0.8cm] (r) at (3, 0) {$s_2$};
\draw[thick] (s)--(r);
\node (e1) at (0,0.5) {$3$};
\end{tikzpicture}
\caption{$S_3$}
\end{subfigure}
\caption{Examples of Coxeter graphs and their corresponding Coxeter groups.}
\label{fig:egsofcgs}
\end{figure}

\subsection{Property (NL)}\label{subsec:propnl}
The formal study of Property $\nl$ was initiated by Balasubramanya, Fournier-Facio, and Genevois in \cite{PropNL}.

\begin{definition}
 We say that $G$ has Property $\nl$--- which stands for ``no loxodromics" ---  when for any action of $G$ on a hyperbolic space $X$, $G$ only contains elliptic or parabolic isometries.
\end{definition} 

We will sometimes simply say that a group $G$ has $\nl$ when we mean that $G$ has Property $\nl$. Finite and torsion groups are examples of groups with this property, since each element in such groups has finite order (and loxodromic elements must have infinite order). 

In addition to exploring the stability of Property $\nl$ under different group operations, the authors of \cite{PropNL} also produced many new examples of groups with the property. In particular, many so-called Thompson-like groups have Property $\nl$ (see \cite[Theorem 1.6]{PropNL} for a list of examples).  

We note the following results about Property $\nl$, which will be used crucially in this paper. The first concerns quotients of groups. 

\begin{prop}\cite[Lemma 4.2]{PropNL}\label{prop:nlquot} Let $N$ be a normal subgroup of $G$ and $\phi \colon G\to G/N$ be the quotient map. If $G/N \acts X$ is a hyperbolic action, then the action of $G \acts X$ defined by $g \cdot x = \phi(g) \cdot x$ is a well defined hyperbolic action of the same type. In particular, if $G/N \acts X$ contains a loxodromic element, then so does $G \acts X$.
\end{prop}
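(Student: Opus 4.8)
The plan is to recognize the pullback action as ordinary precomposition of a homomorphism with $\phi$, and then to observe that the orbit of an element under the new action coincides exactly with the orbit of its image under the original action, so that every dynamical feature transfers verbatim. The argument is essentially a verification; the one point meriting care is to route the ``same type'' claim through an exact equality of orbit maps rather than through any separate dynamical input.

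First I would record the action $G/N \acts X$ as a homomorphism $\psi \colon G/N \to \op{Isom}(X)$, since an isometric action is exactly the data of such a homomorphism. The pullback action is then the composite $\psi \circ \phi \colon G \to \op{Isom}(X)$, that is, $g \cdot x = \psi(\phi(g))(x) = \phi(g)\cdot x$. As a composition of group homomorphisms is again a homomorphism, this immediately yields well-definedness together with both action axioms: $e_G \cdot x = \phi(e_G)\cdot x = x$ because $\phi(e_G)$ is the identity of $G/N$, and $g\cdot(h\cdot x) = \phi(g)\cdot(\phi(h)\cdot x) = (\phi(g)\phi(h))\cdot x = \phi(gh)\cdot x = (gh)\cdot x$ using that $\phi$ is a homomorphism. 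No choices enter the definition, so there is nothing further to check.

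Next I would confirm that the action is isometric on the same hyperbolic space. For any $g \in G$ and $x,y \in X$ we have $d(g\cdot x, g\cdot y) = d(\phi(g)\cdot x, \phi(g)\cdot y) = d(x,y)$, the last equality holding because $G/N \acts X$ is isometric. The metric space $(X,d)$ is entirely unchanged, so it remains hyperbolic with the same hyperbolicity constant $\delta$; hence $G \acts X$ is a hyperbolic action.

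The heart of the statement is the coincidence of orbit maps. Fix $x_0 \in X$ and $g \in G$. Since $\phi$ is a homomorphism, $g^n \cdot x_0 = \phi(g^n)\cdot x_0 = \phi(g)^n \cdot x_0$ for every $n \in \zz$, so the orbit map $n \mapsto g^n\cdot x_0$ for $G \acts X$ is literally the same function as $n \mapsto \phi(g)^n\cdot x_0$ for $G/N \acts X$. Because the classification of an element as elliptic, loxodromic, or parabolic (Section~\ref{subsec:isom}) is determined solely by the behavior of this orbit map---boundedness, being a quasi-isometric embedding, or neither---the element $g$ carries exactly the same type under $G \acts X$ as $\phi(g)$ does under $G/N \acts X$. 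For the final assertion, suppose $\bar g \in G/N$ acts loxodromically; as $\phi$ is surjective, choose any preimage $g \in \phi^{-1}(\bar g)$, and by the orbit-map identity $g$ acts loxodromically under $G \acts X$. I do not anticipate a genuine obstacle here, only the need to phrase the type-preservation through this exact equality of orbit maps.
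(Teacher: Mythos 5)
The paper does not actually prove this proposition---it is imported verbatim from \cite[Lemma 4.2]{PropNL}---so there is no in-paper argument to compare against; your verification is correct and is exactly the standard argument one would write (pullback as the composite $\psi \circ \phi$ into $\op{Isom}(X)$, equality of orbit maps, surjectivity of $\phi$ for the converse direction on loxodromics). One small remark: in the source lemma, ``same type'' refers to the Gromov classification of the \emph{action} (elliptic, lineal, focal, etc.), not only of individual elements, but your argument covers this too, since surjectivity of $\phi$ makes the image of $G$ in $\op{Isom}(X)$ literally equal to that of $G/N$, so every orbit-based invariant of the action coincides as well.
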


The next result follows from \cite[Corollary 1.5]{PropNL}. 

\begin{prop}\label{prop:nlamen}
Let $G$ be a finitely generated virtually abelian group. Then $G$ has Property $\nl$ if and only if it
does not surject onto $\zz$ or $D_\infty$.
\end{prop}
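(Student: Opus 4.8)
The plan is to prove the two implications separately, treating the ``only if'' direction (via its contrapositive) as the routine one and the ``if'' direction as the substantive one. First I would dispose of the forward implication: if $G$ surjects onto $\zz$ or onto $D_\infty$, then $G$ fails Property $\nl$. Both $\zz$ and $D_\infty$ act on the real line $\rr$ --- a hyperbolic space --- with a loxodromic element, namely translation by a fixed amount, exactly as in the discussion of Section~\ref{subsec:dinf}. Pulling either of these actions back along the quotient map and invoking Proposition~\ref{prop:nlquot} produces a hyperbolic action of $G$ with a loxodromic element, so $G$ does not have Property $\nl$. This uses nothing beyond Proposition~\ref{prop:nlquot} and the standard lineal actions on $\rr$.

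For the reverse implication I would argue the contrapositive: assuming $G$ does not have Property $\nl$, I would produce a surjection onto $\zz$ or $D_\infty$. Fix a hyperbolic action $G \acts X$ with a loxodromic element $g$. Since $G$ is finitely generated and virtually abelian, it is amenable (indeed unimodular, of polynomial growth), and the classification of hyperbolic actions of such groups --- the content I would import from \cite[Corollary 1.5]{PropNL} --- rules out general-type and focal behaviour and forces the action to be lineal. Concretely, $G$ must fix the unordered pair $\{\xi^+, \xi^-\} \subseteq \partial X$ of endpoints of the axis of $g$. From a lineal action one extracts a homomorphism $\rho \colon G \to \mathrm{Isom}(\rr) \cong \rr \rtimes (\zz/2\zz)$ recording, for each element, its signed asymptotic translation length along the axis together with whether it preserves or swaps the two endpoints. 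Because $g$ is loxodromic it fixes $\xi^+$ and $\xi^-$ individually, so $\rho(g)$ is a nontrivial translation and the image $\rho(G)$ is infinite.

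It then remains to read off the claimed surjection from $\rho(G)$, which is a finitely generated subgroup of $\mathrm{Isom}(\rr)$ containing a nontrivial translation. If no element of $G$ reverses orientation, then $\rho$ lands in the translation subgroup $\rr$, so $\rho(G)$ is a nontrivial finitely generated subgroup of $\rr$, hence free abelian of positive rank, and therefore surjects onto $\zz$; composing gives $G \twoheadrightarrow \zz$. If some element reverses orientation, then the translation part $T = \rho(G) \cap \rr \cong \zz^k$ with $k \geq 1$ (it contains $\rho(g)$), and every orientation-reversing element conjugates translations to their inverses, so $\rho(G) \cong \zz^k \rtimes (\zz/2\zz)$ with the $\zz/2\zz$ acting by inversion. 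Since inversion is compatible with any homomorphism $\zz^k \twoheadrightarrow \zz$, this equivariantly descends to a surjection of $\rho(G)$, and hence of $G$, onto $\zz \rtimes (\zz/2\zz) = D_\infty$. Either way $G$ surjects onto $\zz$ or $D_\infty$, completing the contrapositive.

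The crux of the argument is the reduction to a lineal action: ruling out focal and parabolic-with-loxodromic behaviour is precisely what can fail for general groups, and it is here that finiteness of generation together with virtual abelianness (via \cite[Corollary 1.5]{PropNL}) does the real work. Once linearity is in hand, the passage to an explicit surjection onto $\zz$ or $D_\infty$ is elementary book-keeping about finitely generated subgroups of $\mathrm{Isom}(\rr)$, so I expect essentially all the difficulty to be concentrated in correctly citing and applying the structural classification rather than in the final extraction.
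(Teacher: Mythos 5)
Your proposal is not competing with an in-paper argument: the paper never proves this proposition, it simply imports it as a consequence of \cite[Corollary 1.5]{PropNL}. So what you have written is a reconstruction of the cited result. Your forward direction (pulling back the standard actions of $\zz$ and $D_\infty$ on $\rr$ through Proposition~\ref{prop:nlquot}) is correct and is the easy half. The reverse direction has the right skeleton --- reduce to a lineal action, extract a map to $\mathrm{Isom}(\rr)$, and read off a quotient $\zz^k$ or $\zz^k \rtimes (\zz/2\zz)$ surjecting onto $\zz$ or $D_\infty$ --- and your final bookkeeping (finitely generated subgroups of $\rr$ are free abelian of finite rank; any surjection $\zz^k \to \zz$ is inversion-equivariant, hence induces a surjection onto $D_\infty$) is fine.

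The gap sits in the middle step, and it is exactly the step you dismiss as ``elementary book-keeping.'' A lineal action is an action on a space quasi-isometric to $\rr$, not on $\rr$ itself, so the assignment of a ``signed asymptotic translation length'' to each group element is a priori only a homogeneous quasimorphism, not a homomorphism; for non-amenable groups there exist lineal actions where it is genuinely not a homomorphism (this is precisely how unbounded quasimorphisms produce actions on quasi-lines). Promoting it to a homomorphism requires invoking amenability a second time: homogeneous quasimorphisms on amenable groups have vanishing defect and hence are homomorphisms. Relatedly, your $\rho$ is not well defined as described on orientation-reversing elements: such an element has translation length $0$, and ``translation length plus the orientation bit'' does not determine which reflection of $\rr$ it should map to. The clean repair is to take the Busemann character $\beta$ on the index-$\leq 2$ end-preserving subgroup $G^+$, use amenability to make it a homomorphism, check that $\ker\beta$ is normal in all of $G$ (via $\beta(hgh^{-1}) = -\beta(g)$ for end-swapping $h$), and identify $G/\ker\beta$ with $\zz^k$ or $\zz^k \rtimes (\zz/2\zz)$. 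Finally, a smaller misstatement: amenability alone does not rule out focal actions (the Baumslag--Solitar group $BS(1,2)$ admits one); either use virtual abelianness to exclude them, or note that a focal action of an amenable group also yields a nontrivial Busemann homomorphism to $\rr$, hence a surjection onto $\zz$, so that case closes the same way.
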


The motivation to study Property $\nl$ in Coxeter groups comes from the following observations: many classically studied groups in geometric group theory admit many actions on hyperbolic spaces with loxodromic elements, as proved by Abbott, Balasubramnya, and Osin in \cite{ABO}. 

In particular, right-angled Artin groups (RAAGs) never have Property $\nl$. Indeed, if $G$ is a RAAG that does not decompose as a direct product, then it admits an action with a loxodromic element on the associated \emph{extension graph}, which is a hyperbolic space. This was proved by Kim and Koberda in \cite{KimKoberda}. Otherwise, $G = A \times B$ for two infinite RAAGs $A,B$, where are least one of $A,B$ does not decompose as a direct product. Suppose $A$ does not decompose as a direct product. Then we may take the quotient $G/B \cong A$ and construct a hyperbolic action of $G$ with a loxodromic element by using Proposition~\ref{prop:nlquot}. 

In contrast, right-angled Coxeter groups (RACGs) can be finite groups, which have Property $\nl$. It is a natural question to consider which infinite RACGs, and more generally, which Coxeter groups, have this property. An interesting area of research in geometric group theory aims at understanding how a property of a RACG is related to the structure of its defining graph; we can therefore ask how to identify whether a RACG has Property $\nl$ using the defining graph.

\section{Right-Angled Coxeter Groups and Property $\nl$}\label{sec:racgs}
Our first result fully classifies which right-angled Coxeter groups have Property $\nl$. 

\begin{theorem}
    The right-angled Coxeter group $W_\Ga$ has Property $\nl$ if and only if the defining graph $\Ga$ is complete.
\end{theorem}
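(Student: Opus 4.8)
First I would establish the easy direction: if $\Ga$ is complete, then $W_\Ga$ has Property $\nl$. When $\Ga$ is complete, every pair of generators commutes (since right-angled edges encode commuting relations, and each generator has order $2$). Thus $W_\Ga$ is abelian, generated by $n$ involutions, so $W_\Ga \cong (\zz/2\zz)^n$, which is finite. As noted in the excerpt, finite groups have Property $\nl$ because loxodromic elements must have infinite order. This direction is essentially immediate once the group-theoretic identification is made.

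The substantive direction is the contrapositive: if $\Ga$ is \emph{not} complete, then $W_\Ga$ fails to have Property $\nl$, i.e., it admits some hyperbolic action with a loxodromic element. If $\Ga$ is not complete, there exist two vertices $s_i, s_j$ with no edge between them, meaning there is no relation between $s_i$ and $s_j$ beyond $s_i^2 = s_j^2 = 1$. The key observation is that the subgroup $\langle s_i, s_j \rangle$ is then isomorphic to the infinite dihedral group $D_\infty$, since two order-$2$ elements with no further relation generate exactly $D_\infty$. My strategy would be to produce a loxodromic element using Proposition~\ref{prop:nlquot} on quotients: the natural approach is to find a normal subgroup $N \trianglelefteq W_\Ga$ such that the quotient $W_\Ga / N$ surjects onto (or is) a group admitting a hyperbolic action with a loxodromic element, such as $D_\infty$ acting on $\rr$ as described in Section~\ref{subsec:dinf}. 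Concretely, I would consider the homomorphism $W_\Ga \to D_\infty$ that sends $s_i$ and $s_j$ to the two generating reflections $r$ and $s$, and sends every other generator to the identity (or to a suitable reflection); one must verify this respects all the Coxeter relations. Since $s_i, s_j$ are non-adjacent, the relation $(s_is_j)^m = 1$ is absent, so no relation forces $rs$ to have finite order, and the map is well-defined. Then the translation $g = sr \in D_\infty$ is loxodromic on $\rr$, and pulling back via Proposition~\ref{prop:nlquot} gives a loxodromic element of $W_\Ga$ acting on $\rr$.

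\textbf{The main obstacle} will be verifying that the proposed map $W_\Ga \to D_\infty$ is a well-defined homomorphism — specifically, checking that sending the other generators to the identity is compatible with \emph{all} commuting relations in $\Ga$. If some generator $s_k$ adjacent to $s_i$ must commute with $r$, this is automatic when $s_k \mapsto 1$, but one must ensure no relation is violated; sending extra generators to $1$ is the cleanest choice and should satisfy every relation trivially, since the image of any relator becomes a relator in $D_\infty$ (as $r^2 = s^2 = 1$ and the absent $(rs)^m$ relation is exactly the one not imposed). I would take care to confirm that the image of $W_\Ga$ is all of $D_\infty$ (so that the loxodromic $g = sr$ genuinely lies in the image) and to cite Section~\ref{subsec:dinf} for the loxodromic dynamics of $g$ on $\rr$. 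An alternative, perhaps cleaner, route is to identify $D_\infty$ directly as a quotient of $W_\Ga$ by the normal closure of all generators except $s_i, s_j$ together with relations collapsing the rest, and then apply Proposition~\ref{prop:nlquot} verbatim.
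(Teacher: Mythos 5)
Your proposal is correct and follows essentially the same route as the paper: the complete case gives $W_\Ga \cong (\zz/2\zz)^n$, which is finite, and in the incomplete case you construct the same homomorphism onto $D_\infty$ (non-adjacent generators to the two reflections, all others to the identity), verify it respects the relations and is surjective, and invoke Proposition~\ref{prop:nlquot} to pull back the loxodromic translation. The well-definedness and surjectivity checks you flag as potential obstacles are exactly the verifications the paper carries out, so no gap remains.
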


\begin{proof}
    We first prove that if $\Ga$ is a complete right-angled Coxeter graph, then $W_{\Ga}$ has $\nl$. Consider the complete right-angled Coxeter graph on $n$ vertices, $s_1, ... , s_n$. Since the graph is complete, each $s_i$ commutes with every $s_j$ for $1 \leq i,j \leq n$. Any element in $W_{\Ga}$ can thus be expressed in the normal form $s_1^{m_1}...s_n^{m_n}$ with $m_k \in \nn$ for $1 \leq k \leq n$. In addition, since $s_i^2 = 1$ for all $1 \leq i \leq n$, we can further simplify this element by specifying that  $m_k \in \{0,1\}$ for $1 \leq k \leq n$. In other words, $s_i^{m_i}$ is equal to the identity when $m_i = 0$ and is equal to $s_i$ when $m_i = 1$. Therefore, $W_{\Ga}$ is isomorphic to ${\zz}_2 \times \cdots \times {\zz}_2$ for $n$ copies of ${\zz}_2$. Since this is a finite group, we conclude that $W_{\Ga}$ has Property $\nl$.

We now prove the converse via the contrapositive. We will show that if $\Ga$ is not complete, then $W_{\Ga}$ must contain a loxodromic element. 

Since $\Ga$ is not complete, there exist two vertices, say $s_1$ and $s_2$, that are not connected by an edge. Thus for $s_1s_2 \in W_{\Ga}$, it follows that $ |s_1s_2| = \infty$. 

Recall the presentation of $D_{\infty}$ as the Coxeter graph with two disconnected vertices, $s$ and $r$ (see Figure~\ref{fig:egsofcgs}(A)). We define a map $\varphi: W_{\Ga} \to D_{\infty}$ by $\varphi(s_1) = s$, $\varphi(s_2) = r$, and $\varphi(s_i) = 1$ for $i \neq 1,2$. We extend the map $\varphi$ to any $w \in W_{\Ga}$ where $w$ is the product of generators $$w = s_{k_1}s_{k_2}s_{k_3}...s_{k_m}$$ for $k_i \in \{1, \cdots, n \}$ by  the rule $$\varphi(w) = \varphi(s_{k_1})\varphi(s_{k_2})\varphi(s_{k_3}) \cdots \varphi(s_{k_m}).$$  

Since our map $\varphi$ is extended canonically over generators, we have that $\varphi$ is a homomorphism. We claim that $\varphi$ is surjective. Consider the element $(sr)^p s^q$ where $q \in \{0,1\}$ and $p \in \zz$. We have that $$\varphi((s_1s_2)^p s_1^q) = (sr)^p s^q.$$ Similarly, elements of the form $(rs)^p r^q$ also have well-defined preimages in $W_\Ga$. Thus $\varphi$ is surjective.

By the first isomorphism theorem, $W_{\Ga}/ \ker\varphi \cong D_{\infty}$. As $D_\infty$ admits a hyperbolic action with a loxodromic element, so does $W_{\Ga}/ \ker\varphi$. By Proposition~\ref{prop:nlquot}, it follows that $W_{\Ga}$ does not have $\nl$, as desired.
\end{proof}

\section{Other Coxeter Groups}\label{sec:nonracgs}

In this section, we consider Coxeter groups that are not right-angled. A complete classification of which groups have Property $\nl$ in this more general setting seems to be hard to tackle. However, in the case where the defining graph is disconnected, we have a concrete answer. 

\begin{prop} If $\Ga$ is a disconnected Coxeter graph, then the Coxeter group $W_\Ga$ does not have Property $\nl$.
\end{prop}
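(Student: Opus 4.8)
The plan is to mimic the strategy used in the proof of Theorem~\ref{thm:main1}: produce a surjection from $W_\Ga$ onto a group that is already known to admit a hyperbolic action with a loxodromic element, and then invoke Proposition~\ref{prop:nlquot}. Since $\Ga$ is disconnected, we can partition its vertex set as $V = A \sqcup B$, where $A$ and $B$ are nonempty and there are \emph{no} edges between any vertex of $A$ and any vertex of $B$. The absence of edges between the two parts means precisely that every generator in $A$ has no defining relation with every generator in $B$, so I expect $W_\Ga$ to split as a free product $W_{\Ga_A} \ast W_{\Ga_B}$, where $\Ga_A$ and $\Ga_B$ are the induced subgraphs on $A$ and $B$. (Even without invoking this free-product structure explicitly, the key point I need is only that there is no relation forcing a generator $a \in A$ to commute with or otherwise interact with a generator $b \in B$.)

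The concrete construction I would carry out: pick any vertex $a \in A$ and any vertex $b \in B$. Define a map $\varphi \colon W_\Ga \to D_\infty$ on generators by $\varphi(a) = s$, $\varphi(b) = r$, and $\varphi(t) = 1$ for every other generator $t$, where $s, r$ are the two standard involutive generators of $D_\infty$ (as in Figure~\ref{fig:egsofcgs}(A)). I then extend $\varphi$ multiplicatively over words. First I must check this is a well-defined homomorphism, which amounts to verifying that $\varphi$ respects all the defining relations of $W_\Ga$. The relations $s_i^2 = 1$ are sent to $s^2 = 1$, $r^2 = 1$, or $1 = 1$, all of which hold in $D_\infty$. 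The edge relations $(s_is_j)^{m_{ij}} = 1$ only ever involve two generators that are joined by an edge; since $a$ and $b$ lie in different connected components, no edge relation ever involves both $a$ and $b$ simultaneously. Hence every edge relation maps either to a relation among $\{s, 1\}$ or among $\{r, 1\}$, and in each case reduces to $s^{m} = 1$ or $r^{m} = 1$ with $m$ even (or $1 = 1$), which holds in $D_\infty$. This is the crucial place where disconnectedness is used, and I expect it to be the main (though still routine) obstacle: I must be careful that no single relation mixes the preimages of $s$ and $r$.

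Having established that $\varphi$ is a homomorphism, surjectivity follows exactly as in Theorem~\ref{thm:main1}: the elements $(ab)^p a^q$ with $p \in \zz$, $q \in \{0,1\}$ map onto $(sr)^p s^q$, and symmetrically $(ba)^p b^q \mapsto (rs)^p r^q$, so every element of $D_\infty$ is hit. By the first isomorphism theorem, $W_\Ga / \ker\varphi \cong D_\infty$. Since $D_\infty$ acts on the real line (a hyperbolic space) with the translation $sr$ as a loxodromic element, the quotient $W_\Ga / \ker\varphi$ admits a hyperbolic action with a loxodromic element. Proposition~\ref{prop:nlquot} then lifts this action to $W_\Ga$, producing a loxodromic element there and showing $W_\Ga$ does not have Property $\nl$. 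The only substantive departure from the right-angled proof is that here the labels $m_{ij}$ on edges within $\Ga_A$ or $\Ga_B$ may exceed $2$; I would note that this causes no difficulty because those relations never involve both $a$ and $b$, so the verification of well-definedness goes through unchanged.
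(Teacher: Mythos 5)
Your overall strategy (surject onto $D_\infty$, then apply Proposition~\ref{prop:nlquot}) is the same as the paper's, but your specific map has a genuine flaw: it is not well-defined. You send a \emph{single} vertex $a \in A$ to $s$, a single vertex $b \in B$ to $r$, and every other generator to $1$. The problem is not, as you suggest, relations mixing $a$ and $b$ --- you are right that disconnectedness rules those out --- but rather relations between $a$ (or $b$) and the other generators \emph{in its own part}, which you send to $1$. If $a$ is joined to some $t \in A$ by an edge labelled $m_{at}$, the relation $(at)^{m_{at}} = 1$ maps to $(s \cdot 1)^{m_{at}} = s^{m_{at}}$, and since $s$ has order $2$ this equals $1$ only when $m_{at}$ is even. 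Your parenthetical ``with $m$ even'' is asserted without justification and is false in general: edge labels in a Coxeter graph can be any integer $\geq 2$. Concretely, take $\Ga$ with vertices $a, t, b$, a single edge $\{a,t\}$ labelled $3$, and $b$ isolated. Then $(at)^3 = 1$ in $W_\Ga$, but your map sends this element to $s^3 = s \neq 1$ in $D_\infty$, so no such homomorphism exists. (This is also why your closing remark --- that the verification ``goes through unchanged'' from the right-angled case --- fails: in Theorem~\ref{thm:main1} all labels are $2$, which is exactly the parity your argument silently requires.)

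The fix is small and turns your argument into essentially the paper's proof: instead of singling out one vertex in each part, send \emph{every} generator in $A$ to $s$ and \emph{every} generator in $B$ to $r$ (the paper sends one connected component to $r$, a second to $s$, and all remaining components to $1$, which works for the same reason). Then an edge relation within $A$ with label $p$ maps to $(s \cdot s)^p = 1$ regardless of the parity of $p$, and likewise within $B$; disconnectedness guarantees there are no relations between the two parts to check. Surjectivity and the application of the first isomorphism theorem and Proposition~\ref{prop:nlquot} then proceed exactly as you wrote them.
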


\begin{proof}
Let $\Ga$ be a disconnected Coxeter graph on $n$ vertices, $s_1,...,s_n$, with non-empty connected components, $\Ga_1, \Ga_2,...,\Ga_m$. Let $X_t$ be the set of generators for $W_{\Ga_t}$. Recall that $D_\infty$ is generated by two reflections $r$ and $s$. 

We define a map $\phi: W_{\Ga} \rightarrow D_{\infty}$ as follows 
\begin{align*}
\varphi(s_h) &= r \text{ for all } x_h \in X_1,\\
\varphi(s_i) &= s \text{ for all } x_i \in X_2 \text{ and },\\
\varphi(s_j) &= 1 \text{ for all } x_j \in X_3\cup ... \cup X_m,\\
& \text{ and extend $\varphi$ to $W_\Ga$ as a homomorphism. }
\end{align*}

 Note that if $a,b \in X_1$ (or $X_2$) are vertices connected by an edge labelled $p$, then we have $(ab)^p =1$ in $W_\Ga$. Under the map $\varphi$, this element is sent to $(r^2)^p = (1)^p =1$. Further, there are no relations between elements of $X_1$ and elements of $X_2$. Therefore, the map $\varphi$ does not introduce any new relations and is thus a homomorphism.

We now show that $\varphi$ is surjective. Consider the element $(rs)^m r^k \in D_{\infty}$ where $m \in \zz$ and $k \in \{0, 1\}$. Take any $s_h \in X_1$ and $s_i \in X_2$; then we have that $\varphi((s_hs_i)^mx_h^k)=(rs)^m r^k$. By the first isomorphism theorem,
\[W_{\Ga}/ \ker\varphi \cong D_{\infty}.\]
By Proposition~\ref{prop:nlquot}, we conclude that $W_\Ga$ does not have Property $\nl$.
\end{proof}

The above result limits our search for further Coxeter groups with Property $\nl$ to those with a connected defining graph. As a starting point of this exploration, we focus on Coxeter groups with 2 and 3 vertices. These groups are also known as dihedral groups and triangle groups, respectively.


\begin{prop}
Let $W_{\Gamma}$ be a Coxeter group with exactly 2 generators, $s_1$ and $s_2$. $W_{\Gamma}$ has Property $\nl$ if and only if its defining graph $\Gamma$ is complete.
\begin{proof}
Suppose the defining graph of $W_{\Gamma}$ is complete with edge label $m$. Because $W_{\Gamma}$ only has two generators, all group  elements can be represented as words of the form $(s_1s_2)^p r$ or $(s_2s_1)^q$ for some $p,q \in \zz$. Because $s_1s_2$ has order $m$, there are finitely many possibilities for $p$ and $q$. Therefore, $W_{\Gamma}$ is finite and has Property $\nl$.

The only Coxeter group with 2 vertices and an incomplete defining graph is $D_{\infty}$, which does not have Property $\nl$.
\end{proof}
\end{prop}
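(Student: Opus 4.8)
The plan is to prove both directions by analyzing the structure of a two-generator Coxeter group, splitting into the complete and incomplete cases. A two-generator Coxeter group $W_\Gamma$ is by definition the dihedral group $\langle s_1, s_2 \mid s_1^2 = s_2^2 = 1, (s_1 s_2)^m = 1\rangle$, where $m \in [2,\infty)\cap\zz$ when the edge is present and $m=\infty$ (no relation) when it is absent. Thus exactly two situations arise: the graph is complete with a finite label $m$, or the graph is incomplete (no edge), in which case $W_\Gamma \cong D_\infty$.

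First I would handle the complete case. With a finite edge label $m$, the element $t = s_1 s_2$ has order $m$, so the cyclic subgroup $\langle t\rangle$ has order $m$. Every word in $s_1, s_2$ can be rewritten, using the involution relations, into one of the normal forms $(s_1 s_2)^p$ or $(s_1 s_2)^p s_1$; since $t^m = 1$ collapses the exponent $p$ modulo $m$, there are at most $2m$ distinct elements. Hence $W_\Gamma$ is finite (it is the finite dihedral group of order $2m$). Any loxodromic isometry must have infinite order, because the map $n \mapsto g^n\cdot x_0$ being a quasi-isometric embedding of $\zz$ forces the orbit to be unbounded; a finite-order element cannot achieve this. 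Therefore a finite group admits no loxodromic element in any hyperbolic action, so $W_\Gamma$ has Property $\nl$.

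Next I would treat the incomplete case. When $s_1$ and $s_2$ are not joined by an edge, there is no relation of the form $(s_1 s_2)^m = 1$, so $s_1 s_2$ has infinite order and $W_\Gamma$ is precisely $D_\infty$. By the discussion in Section~\ref{subsec:dinf}, $D_\infty$ acts on the real line $\rr$ by the reflections $r$ and $s$, and the translation $sr$ is a loxodromic element of this hyperbolic action. Consequently $D_\infty$ does not have Property $\nl$, which completes the incomplete case and the proof.

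The only real subtlety, rather than a genuine obstacle, is justifying that these two cases are exhaustive and that the incomplete graph yields \emph{exactly} $D_\infty$ rather than merely a group surjecting onto it; but this is immediate from the two-generator Coxeter presentation, since the absence of an edge is definitionally the absence of any relation between $s_1$ and $s_2$ beyond the involution relations. I expect the complete-case normal-form bookkeeping to be the most computation-heavy step, though it is entirely routine once one observes that $s_1 s_2 s_1 s_2 \cdots$ alternates and the involution relations let every word be reduced to an alternating word of bounded length.
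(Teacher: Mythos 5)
Your proof is correct and follows essentially the same route as the paper: in the complete case you reduce to the finite dihedral group of order $2m$ via the alternating normal form, and in the incomplete case you identify $W_\Gamma$ with $D_\infty$, which fails Property $\nl$ via its loxodromic translation on $\rr$. The only difference is that you spell out details the paper leaves implicit (why finite groups have $\nl$, and why $D_\infty$ fails it), which is fine.
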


\begin{definition}
    A triangle group is a Coxeter group whose defining graph has three vertices, and three edges. Each edge is labelled by $l,m,n$, where $l,m,n \geq 2$ are integers. 
\end{definition}

Triangle groups can be visualized as groups generated by reflections across each of the three sides of a triangle. The resulting group is then the group of symmetries of a tiling by these triangles of one of three geometries: the sphere, Euclidean plane, or hyperbolic plane. This justifies the nomenclature of triangle groups. 

The order of the relation between two of these generating reflections is closely related to the angle between the associated sides of the triangle. Consider a right triangle in the Euclidean plane. Reflecting across one of the legs and then the perpendicular leg will result in a rotation by $\pi$, which is its own inverse, and thus is an order 2 element. This is also why we call those Coxeter groups with only weights 2 on the edges right-angled. 

As explained above, we can see that in the right-angled case, when the reflections intersect at an angle of $\frac{\pi}{2}$, the order of the Coxeter relation between them is $2$. Similarly, a tiling of the Euclidean plane with equilateral triangles has generating reflections that intersect at an angle of $\frac{\pi}{3}$. It is straightforward to check that the composition of two of these reflections three times will yield the original triangle, showing that these generators have a Coxeter relation of order $3$. 

More generally, given a Coxeter graph with three vertices and edge relations labeled $l,m,n$, the corresponding triangle has interior angles $\dfrac{\pi}{l}, \dfrac{\pi}{m}$, and $\dfrac{\pi}{n}$. Only certain choices of $l,m,n$ will allow for Euclidean triangles, whose interior angles  must add up to $\pi$. In fact, we can  fully classify  triangle groups by the sum of their interior angles. Let $s = \dfrac{\pi}{l} +\dfrac{\pi}{m} +\dfrac{\pi}{n}$. Then,

\begin{enumerate}
    \item[(1)] $s = \pi$ corresponds to a triangle tiling of the Euclidean plane,
    \item[(2)] $s > \pi$ corresponds to a triangle tiling of a sphere, and
    \item[(3)] $s < \pi$ corresponds to a triangle tiling of the hyperbolic plane.
\end{enumerate}

We will consider each of these types of triangle groups individually and deduce in each case if the group has Property $\nl$ or not. In what follows, we will use the notation $\Delta(l,m,n)$ to denote the triangle group with edge labels $l,m,n$. We start with the spherical and Euclidean cases.

\begin{prop}
Spherical triangle groups have Property $\nl$.
\end{prop}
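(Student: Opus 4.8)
The plan is to reduce everything to a \emph{finiteness} statement: I would show that every spherical triangle group is finite, after which Property $\nl$ is immediate from the observation recorded earlier in the paper that finite groups cannot contain a loxodromic element (loxodromics have infinite order, but every element of a finite group has finite order). So the real content is to prove that $\Delta(l,m,n)$ is finite whenever $s = \frac{\pi}{l}+\frac{\pi}{m}+\frac{\pi}{n} > \pi$, equivalently whenever $\frac{1}{l}+\frac{1}{m}+\frac{1}{n} > 1$.

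First I would solve the inequality $\frac{1}{l}+\frac{1}{m}+\frac{1}{n}>1$ over integers $l,m,n \ge 2$. Up to reordering, the solutions are exactly the infinite family $(2,2,n)$ with $n \ge 2$, together with the three sporadic triples $(2,3,3)$, $(2,3,4)$, and $(2,3,5)$. This reduces the claim to a short case analysis. For the family $\Delta(2,2,n)$, the generator whose two incident edges both carry the label $2$ commutes with the other two generators and is therefore central of order $2$; the remaining two generators satisfy a single relation of order $n$ and so generate a dihedral group of order $2n$. Hence $\Delta(2,2,n) \cong \zz_2 \times D_n$ is finite. The three sporadic groups are the full symmetry groups of the Platonic solids (tetrahedron, octahedron, icosahedron) and are thus finite, of orders $24$, $48$, and $120$ respectively.

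Alternatively, and more uniformly, I would invoke the geometric description already set up in this section: a spherical triangle group is the reflection group of a tiling of the round sphere $S^2$ by copies of a geodesic triangle with angles $\frac{\pi}{l},\frac{\pi}{m},\frac{\pi}{n}$. By the spherical area formula each such triangle has area equal to the angle excess $s-\pi>0$, while $S^2$ has total area $4\pi$; since the reflection group acts simply transitively on the triangles of the tiling, its order is $\frac{4\pi}{s-\pi}<\infty$. The main obstacle in \emph{either} route is the same: justifying that the \emph{abstract} Coxeter group $W_\Ga$ coincides with this finite reflection group, i.e.\ that the geometric action on $S^2$ is faithful and free on chambers, so that no further identifications inflate or collapse the count (equivalently, in the enumeration route, that the sporadic presentations present the stated Platonic symmetry groups and not some larger group). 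I expect to close this gap by appealing to the standard theory of finite reflection groups -- the condition $s>\pi$ is exactly positive-definiteness of the Tits form, which guarantees finiteness -- or by directly exhibiting the faithful action on $S^2$ and counting chambers as above.
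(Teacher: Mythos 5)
Your proposal is correct, and your second (geometric) route is essentially the paper's own proof: the paper argues that a spherical triangle group is the symmetry group of a tiling of the sphere by finitely many triangles, hence is finite, hence has Property $\nl$ --- exactly your area-count argument, minus the explicit computation of the order as $4\pi/(s-\pi)$. Two differences are worth noting. First, you explicitly flag the step the paper leaves implicit: that the \emph{abstract} Coxeter group $\Delta(l,m,n)$ is isomorphic to (not merely surjects onto) the geometric reflection group acting simply transitively on the triangles, which is what turns the finite count of triangles into a bound on the group order; you correctly identify that this is closed by the standard theory of finite reflection groups (positive definiteness of the associated bilinear form, or faithfulness of the geometric representation), whereas the paper treats the identification as part of what it means to be a spherical triangle group. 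Second, your enumeration route --- solving $\frac{1}{l}+\frac{1}{m}+\frac{1}{n}>1$ to get $(2,2,n)$, $(2,3,3)$, $(2,3,4)$, $(2,3,5)$, then exhibiting $\Delta(2,2,n)\cong \zz_2\times D_n$ and the three sporadic groups as Platonic symmetry groups --- is a genuinely different and more elementary argument that avoids spherical geometry entirely, at the cost of the same kind of verification (that the stated presentations present exactly those finite groups, e.g.\ again via the positive-definiteness criterion or a direct order count). Either route proves the proposition; the paper's is shorter but leans silently on the geometric identification, while yours makes the needed input explicit.
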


\begin{proof} In the spherical case, there are only finitely many triangles in the tiling of the sphere. Thus, the group of symmetries of this tiling is a finite group. As we have seen, all finite groups have Property $\nl$, so spherical triangle groups must have Property $\nl$. 
\end{proof}

\begin{lemma}\label{th:lemma}
No triangle group surjects onto the integers or $D_{\infty}$. 
\end{lemma}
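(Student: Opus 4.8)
The statement asserts that no triangle group $\Delta(l,m,n)$ surjects onto $\zz$ or $D_\infty$. The natural approach is to observe that both $\zz$ and $D_\infty$ have abelianizations that are infinite, and more specifically, that a surjection onto either group would force the abelianization of the triangle group to be infinite. So the plan is to compute the abelianization $\Delta(l,m,n)^{\mathrm{ab}}$ and show it is always finite, which immediately rules out surjections onto $\zz$ (whose abelianization is $\zz$, infinite) and onto $D_\infty$ (whose abelianization is $\zz_2 \times \zz_2$, but which itself surjects onto $\zz$, so any surjection onto $D_\infty$ composes to one onto $\zz$ after abelianizing---I would double-check this last point, since $D_\infty^{\mathrm{ab}} \cong \zz_2 \times \zz_2$ is finite, so the cleaner argument routes through the fact that $D_\infty$ contains a finite-index copy of $\zz$).

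\textbf{Key steps.} First, I would set up the abelianization: writing $\Delta(l,m,n) = \langle a,b,c \mid a^2=b^2=c^2=1, (ab)^l = (bc)^m = (ca)^n = 1\rangle$, abelianizing forces all generators to have order dividing $2$ and forces the relations $(ab)^l=1$ etc.\ to become $a^l b^l = 1$, i.e.\ constraints in $(\zz_2)^{\{a,b,c\}}$. The abelianization is therefore a quotient of $\zz_2^3$, hence finite (of order at most $8$). Second, since a surjection $\Delta(l,m,n) \twoheadrightarrow \zz$ would descend to a surjection $\Delta(l,m,n)^{\mathrm{ab}} \twoheadrightarrow \zz$, and a finite group cannot surject onto the infinite group $\zz$, no such surjection exists. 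Third, for $D_\infty$: I would use that $\zz \leq D_\infty$ with index $2$ (the translation subgroup), so a surjection $\Delta \twoheadrightarrow D_\infty$ pulls back this index-$2$ subgroup to an index-$2$ subgroup $H \leq \Delta$ surjecting onto $\zz$; then $H^{\mathrm{ab}}$ must be infinite. This requires knowing $H^{\mathrm{ab}}$ is finite, which follows if I can argue every finite-index subgroup of $\Delta(l,m,n)$ has finite abelianization.

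\textbf{The main obstacle.} The delicate part is the $D_\infty$ case, because $D_\infty^{\mathrm{ab}} \cong \zz_2\times\zz_2$ is finite, so the abelianization argument does \emph{not} directly obstruct a surjection onto $D_\infty$. The cleanest route is probably to argue contrapositively through the index-$2$ subgroup as above, which reduces everything to the $\zz$ case but now for a finite-index subgroup; alternatively, and perhaps more simply given the geometric framing of the paper, one can use that $\Delta(l,m,n)$ is generated by torsion elements (the three involutions) while any surjection must send generators to elements whose images generate the target---and then observe that in $D_\infty$ the involutions (reflections) generate, but checking that no assignment of the three involutions to elements of $D_\infty$ yields a surjection with the Coxeter relations preserved is essentially a finite case-check governed by the images of $ab, bc, ca$ having orders $l,m,n$ respectively. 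I expect the $\zz$ half to be routine and the $D_\infty$ half to be where care is needed; I would likely prove the $\zz$ statement first via abelianization, then leverage it for $D_\infty$ via the index-$2$ translation subgroup, taking care to justify that this subgroup also has finite abelianization (which it does, being itself a finite-index subgroup of a group with finite abelianization whose finite-index subgroups are all finitely generated and, being Euclidean or spherical crystallographic in the non-hyperbolic cases, virtually abelian---here I would invoke Proposition~\ref{prop:nlamen} and the geometric classification already established).
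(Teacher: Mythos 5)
Your $\zz$ half is correct and is essentially the paper's argument in different clothing: the paper notes that the generators are torsion, so any homomorphism to $\zz$ kills them; your abelianization computation (a quotient of $\zz_2^3$, hence finite) encodes the same fact. The genuine gap is in your primary route for the $D_\infty$ half. You pull back the translation subgroup $\zz \le D_\infty$ to an index-$2$ subgroup $H \le \Delta(l,m,n)$ surjecting onto $\zz$, and to get a contradiction you need every such $H$ to have finite abelianization. Neither justification you offer works. First, it is false in general that a finite-index subgroup of a group with finite abelianization again has finite abelianization, and hyperbolic triangle groups are the standard counterexample: they contain finite-index torsion-free subgroups, which are genus-$g$ surface groups ($g \ge 2$) with abelianization $\zz^{2g}$. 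So your claim that ``every finite-index subgroup of $\Delta(l,m,n)$ has finite abelianization'' fails precisely in the hyperbolic case, which the lemma must cover, since it is stated for all triangle groups. Second, your appeal to virtual abelianness and Proposition~\ref{prop:nlamen} only speaks to the spherical and Euclidean cases, and in any case virtual abelianness does not give finite abelianization ($\zz$ itself is virtually abelian). The statement you actually need---that every \emph{index-$2$} subgroup of a triangle group has finite abelianization---happens to be true (such subgroups are genus-zero orbifold groups), but establishing it requires a Reidemeister--Schreier or orbifold-covering computation that you have not carried out, and which is considerably more work than the direct argument.

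The direct argument is the one you mention in passing and then set aside, and it is exactly the paper's proof: under any homomorphism $\phi\colon \Delta(l,m,n) \to D_\infty$, each generator maps to an element of order dividing $2$, i.e., to the identity or to a reflection. If $\phi$ is onto, then two generators, say $a$ and $b$, must map to \emph{distinct} reflections, since otherwise the image has at most two elements. But the product of two distinct reflections in $D_\infty$ is a nontrivial translation, hence of infinite order, whereas $ab$ has finite order in $\Delta(l,m,n)$ because every pair of generators of a triangle group is joined by an edge with a finite label. This contradiction needs no abelianizations of subgroups, works uniformly in the spherical, Euclidean, and hyperbolic cases, and is exactly where the completeness of the defining triangle enters. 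I recommend promoting your ``alternative'' to the main argument and writing out that short case analysis, rather than repairing the index-$2$ route.
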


\begin{proof}
We first claim that no triangle group surjects onto the integers. The only element of finite order in the group of the integers is $0$ (the identity). In order for a triangle group to map onto the integers, elements of finite order must be sent to elements of finite order. Thus, all generators of the triangle group would have to be sent to the identity, and so the map would be the trivial homomorphism. We conclude that no triangle group can surject onto the integers.

Now consider some triangle group $\Delta (l,m,n)$ with generators $a$, $b$, and $c$. By way of contradiction, assume there exists some surjective homomorphism $\phi$ from $\Delta (l,m,n)$ to $D_{\infty}$. Observe that for this homomorphism to be surjective, at least two of the generators must map to distinct non-trivial involutions in $D_{\infty}$. Otherwise the image of the homomorphism would be trivial or a finite subgroup of $D_{\infty}$ with only 2 elements, which implies $\phi$ is not surjective.

Recall $D_\infty$ is generated by the involutions $r$ and $s$. Then involutions in $D_{\infty}$ are either of the form $(rs)^pr$ or $(sr)^qs$ for some $p,q\in \nn$. 

We claim that the images of at least two generators under $\phi$ have to be distinct involutions. Let $x$ be a reflection in $D_{\infty}$ and suppose $\phi(a)=\phi(b)=\phi(c)=x$ or $\phi(a)=\phi(b)=x$ and $\phi(c)=\id.$ Then $\phi\left(\Delta(l,m,n)\right)$ is finite and therefore not equal to $D_{\infty}.$
Without loss of generality, let $a,b$ be the two generators whose images are distinct involutions.

We now have two cases. In the first case, $\phi(a) = (rs)^pr$  and $\phi(b) = (sr)^qs$ for some $p, q \in \nn$. So, $\phi(ab) = (rs)^{p+q+1}$. As $a,b$ are connected by an edge labelled by one of $
l,m,n$, $ab$ is a finite order element in $ \Delta(l,m,n)$. But then $(rs)^{p+q+1}$ has finite order in $D_{\infty}$. This is a contradiction because $rs$ is a translation and has infinite order.

In the second case, $a$ and $b$ are mapped to involutions that both start with the same generator. So without loss of generality, suppose $\phi(a) = (rs)^pr$ and $\phi(b) = (rs)^q r$ for some $p,q \in \nn$. If $p>q$, $\phi(ab)=(rs)^{p-q}$ and if $q >p$, $\phi(ab)=(sr)^{q-p}$. In either instance we obtain a contradiction as $ab$ has finite order, but $(sr)^{-1} = rs$ has infinite order.
So no surjective homomorphisms can exist from the triangle groups to $D_{\infty}$.
\end{proof}

\begin{prop}
Euclidean triangle groups have Property $\nl$.
\end{prop}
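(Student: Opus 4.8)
The plan is to show that Euclidean triangle groups are virtually abelian and then invoke Proposition~\ref{prop:nlamen} together with Lemma~\ref{th:lemma}. Recall that a Euclidean triangle group $\Delta(l,m,n)$ is the group of symmetries of a tiling of the Euclidean plane $\rr^2$ by triangles with interior angles $\frac{\pi}{l}, \frac{\pi}{m}, \frac{\pi}{n}$, so the defining condition $s = \pi$ restricts us to finitely many cases, namely $\Delta(3,3,3)$, $\Delta(2,4,4)$, and $\Delta(2,3,6)$ (up to reordering). Each such group acts on $\rr^2$ by isometries, and the subgroup of orientation-preserving isometries consists of translations and rotations; crucially, the translations form a finite-index normal subgroup isomorphic to $\zz^2$, generated by the two independent lattice translations of the tiling.

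First I would make precise that $\Delta(l,m,n)$ in the Euclidean case is \emph{virtually abelian}. The cleanest way is to identify the normal subgroup $T \cong \zz^2$ of translations arising from the periodicity of the tiling: since there are only finitely many triangles up to translation by the lattice, the quotient $\Delta(l,m,n)/T$ is finite (it is the point group, a finite dihedral or cyclic group). Hence $\Delta(l,m,n)$ is finitely generated and virtually $\zz^2$, in particular virtually abelian, so Proposition~\ref{prop:nlamen} applies and reduces the problem to verifying that the group surjects onto neither $\zz$ nor $D_\infty$.

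Second, I would dispatch the two surjection conditions. The non-surjection onto $\zz$ and onto $D_\infty$ is exactly the content of Lemma~\ref{th:lemma}, which applies to \emph{every} triangle group and in particular to the Euclidean ones. Combining these: $\Delta(l,m,n)$ is virtually abelian and surjects onto neither $\zz$ nor $D_\infty$, so by Proposition~\ref{prop:nlamen} it has Property $\nl$.

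The main obstacle I anticipate is rigorously establishing the virtually abelian structure, i.e.\ producing the finite-index $\zz^2$ of translations. A fully self-contained argument would require either quoting the classical Bieberbach-type fact that a Euclidean triangle group is a cocompact crystallographic group (hence virtually $\zz^2$ by Bieberbach's theorem), or explicitly exhibiting two independent translations in each of the three cases and checking they generate a finite-index subgroup. I would favor the former, citing the standard theory of crystallographic groups, and then treat the surjection step as immediate from Lemma~\ref{th:lemma}; the finiteness of the case list ($\Delta(3,3,3)$, $\Delta(2,4,4)$, $\Delta(2,3,6)$) means that even an explicit verification remains a short, finite check rather than a genuine difficulty.
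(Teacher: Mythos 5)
Your proposal is correct, and its overall skeleton is the same as the paper's: restrict to the three Euclidean groups $\Delta(3,3,3)$, $\Delta(2,4,4)$, $\Delta(2,3,6)$, show they are virtually $\zz^2$, rule out surjections onto $\zz$ and $D_\infty$ via Lemma~\ref{th:lemma}, and conclude with Proposition~\ref{prop:nlamen}. The one place you diverge is the justification of virtual abelianness. The paper argues geometrically at the level of quasi-isometry: the action on the plane is proper and cobounded, so by the Svarc--Milnor Lemma each group is quasi-isometric to $\zz^2$, and then it invokes the quasi-isometric rigidity of $\zz^2$ (a group quasi-isometric to $\zz^2$ is virtually $\zz^2$, cited as \cite{qitoz2}). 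You instead exhibit the structure algebraically: the translation lattice of the tiling is a normal subgroup $T \cong \zz^2$ of finite index, with quotient the finite point group --- the crystallographic (Bieberbach) point of view. Your route is more elementary in the sense that it avoids quasi-isometric rigidity of $\zz^2$, which is a genuinely deep theorem, at the cost of either citing Bieberbach's theorem or carrying out a short explicit check in the three cases; the paper's route is shorter on the page because the heavy lifting is outsourced to a citation. Either way the reduction to Proposition~\ref{prop:nlamen} and Lemma~\ref{th:lemma} is identical, and your proof is complete once the translation-lattice claim is pinned down by one of the two means you name.
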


\begin{proof}
Unlike the spherical case, the triangle tiling of the plane extends infinitely, so the associated triangle group is not finite.
However, there are only three triangle groups that satisfy condition (1) above for the Euclidean case: $\Delta (3,3,3)$, $\Delta (2,4,4)$ and $\Delta (2,3,6)$. These are all well-documented in the classification of Coxeter groups, see for example \cite{trianglegrps}.

All three Euclidean triangle groups are virtually abelian of finite rank (in fact virtually $\zz^2$). Indeed, since these groups admit proper and cobounded actions on $\zz^2$, it follows from the Svarc--Milnor Lemma that they are all quasi-isometric to $\zz^2$ and thus contain a finite index subgroup isomorphic to $\zz^2$ \cite{qitoz2}. (See also \cite{etg} for a more precise description of the structure of these groups). By Lemma ~\ref{th:lemma}, none of these groups surject onto $D_{\infty}$ or $\zz$. Therefore, by Proposition~\ref{prop:nlamen}, the Euclidean triangle groups have Property (NL).
\end{proof}

\begin{prop}
Hyperbolic triangle groups do not have Property $\nl$.
\end{prop}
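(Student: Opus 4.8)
The plan is to produce a single hyperbolic action of $\Delta(l,m,n)$ that has a loxodromic element --- namely its defining action on $\hh^2$ --- and to locate such an element using the large-scale geometry of the group rather than a surjection onto $D_\infty$, since the latter route is blocked by Lemma~\ref{th:lemma}. Recall that a hyperbolic triangle group is by definition realized as the group generated by the reflections $a,b,c$ in the three sides of a hyperbolic triangle $T\subset\hh^2$ with interior angles $\pi/l,\pi/m,\pi/n$; since $s<\pi$, such a compact triangle exists, and the $\Delta$-translates of $T$ tile $\hh^2$. This gives an isometric action $\Delta(l,m,n)\acts\hh^2$, and since $\hh^2$ is a (Gromov) hyperbolic space, it is a hyperbolic action in the sense of Section~\ref{subsec:isom}.

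First I would record that this action is both proper (the group is a discrete subgroup of $\op{Isom}(\hh^2)$) and cocompact (the compact triangle $T$ is a fundamental domain). By the Svarc--Milnor Lemma --- exactly as invoked in the Euclidean case --- the orbit map $o\colon\Delta(l,m,n)\to\hh^2$, $\gamma\mapsto\gamma\cdot x_0$, is a quasi-isometry; in particular $\Delta(l,m,n)$ is a hyperbolic group, and it is infinite because the tiling is infinite.

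Next I would extract an infinite-order element $g\in\Delta(l,m,n)$, which exists because an infinite hyperbolic group is never a torsion group. In a hyperbolic group the cyclic subgroup $\langle g\rangle$ generated by an infinite-order element is undistorted, i.e.\ $n\mapsto g^n$ is a quasi-isometric embedding $\zz\to\Delta(l,m,n)$. Composing with the orbit quasi-isometry $o$ shows that $n\mapsto g^n\cdot x_0$ is a quasi-isometric embedding $\zz\to\hh^2$, which is precisely the definition of $g$ acting loxodromically. Hence the action $\Delta(l,m,n)\acts\hh^2$ has a loxodromic element and $\Delta(l,m,n)$ does not have Property $\nl$.

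The step to be careful about is the extraction of the loxodromic element, which rests on two standard facts about hyperbolic groups (existence of an infinite-order element, and undistortedness of its cyclic subgroup). If one prefers to avoid quoting these, an equivalent route is to classify the isometry $g$ directly: in a discrete cocompact group no nontrivial element is parabolic (a parabolic fixed point would force a cusp, contradicting cocompactness) and every elliptic element has finite order (by discreteness), so any infinite-order $g$ is forced to be hyperbolic, hence loxodromic. A fully self-contained variant would instead exhibit $g$ explicitly as the product of reflections in two \emph{ultraparallel} walls of the tiling (a suitable product or conjugate of $a,b,c$), whose composite is a hyperbolic translation along their common perpendicular geodesic; the only genuine work there is verifying that two such ultraparallel reflection lines occur among the walls, which holds because the tiling is infinite and locally finite.
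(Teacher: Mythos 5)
Your proof is correct and takes essentially the same route as the paper: both establish that the defining action on $\hh^2$ is proper and cobounded, apply the Svarc--Milnor Lemma to conclude that $\Delta(l,m,n)$ is an infinite hyperbolic group, and deduce the existence of a loxodromic element. The only difference is that you make explicit the final step (an infinite hyperbolic group has an infinite-order element whose cyclic subgroup is undistorted, hence acts loxodromically), which the paper leaves implicit in the assertion that an infinite hyperbolic group cannot have Property $\nl$.
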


\begin{proof}
By definition, a hyperbolic Coxeter group $G$ tiles the hyperbolic plane, and therefore the action on $\hh^2$ is cobounded (the fundamental domain is given by the triangle with angles $\pi/l , \pi/m, \pi/n$). Furthermore, for any bounded set $B \in \hh^2$, we have that $\{ g | gB \cap B \neq \emptyset \}$ is a finite set, so the action is proper. By the Svarc--Milnor Lemma, there is a finite generating set $Y$ of $G$ such that the Cayley graph $\Ga (G,Y)$ is quasi-isometric to $\hh^2$. Since hyperbolicity is preserved under quasi-isometries, we have that  $\Ga (G, Y)$ is hyperbolic and thus $G$ is an infinite hyperbolic group. Therefore $G$ does not have Property $\nl$.
\end{proof}

\bibliographystyle{alpha}
\bibliography{bibliography}
\end{document}